\documentclass[12pt]{amsart}
\usepackage{amssymb,latexsym}
\usepackage{enumerate}
\usepackage[margin=2in]{geometry}

\makeatletter
\@namedef{subjclassname@2010}{%
  \textup{2010} Mathematics Subject Classification}
\makeatother



\newtheorem{thm}{Theorem}
\newtheorem{cor}{Corollary}
\newtheorem{prop}{Proposition}



\theoremstyle{definition}


\newtheorem*{xrem}{Remark}


\numberwithin{equation}{section}


\frenchspacing

\textwidth=13.5cm
\textheight=23cm
\parindent=16pt
\oddsidemargin=-0.5cm
\evensidemargin=-0.5cm
\topmargin=-0.5cm




\newcommand\R{\mathbb{R}}

\newcommand{\sr}[1]{\mathfrak{M}_{#1}}
\newcommand{\srpot}[1]{\mathfrak{N}_{#1}}
\newcommand{\norma}[1]{\left\| #1 \right\| }

\newcommand{\constproved}{4}
\newcommand{\hamymean}[1]{\mathfrak{ha}^{[#1]}}
\newcommand{\hayashimean}[1]{\mathfrak{hy}^{[#1]}}



\begin{document}


\baselineskip=17pt



\title[On some type of Hardy's inequality]{On some type of Hardy's inequality involving generalized power means}

\author[P. Pasteczka]{Pawe{\l} Pasteczka}
\address{Institute of Mathematics\\University of Warsaw\\
02-097 Warszawa, Poland}
\email{ppasteczka@mimuw.edu.pl}

\begin{abstract}
We discuss properties of certain generalization of Power Means proposed in 1971 by Carlson, Meany and Nelson. For any fixed parameter $(k,s,q)$ and vector $(v_1,\ldots,v_n)$ they take the $q$-th power means of all possible $k$-tuples $(v_{i_1},\ldots,v_{i_k})$, and then calculate the $s$-th power mean of the resulting vector of length $C_{n}^{k}$.

We work towards a complete answer to the question when such means satisfy inequalities resembling the classical Hardy inequality. We give a definitive answer in a large part of the parameter space.
 
An unexpected corollary is that this family behaves much differently than most of other families admitting Hardy-type inequalities. Namely, arbitrarily small perturbations of parameters \textit{may} lead to the breakdown of such inequalities.
\end{abstract}

\subjclass[2010]{Primary 26D15; Secondary 26E60, 26D07}

\keywords{Hardy's inequality, inequalities, Carleman's inequality, Hardy mean}

\maketitle

\section{Introduction}

The most popular family of means encountered in the literature consists of Power Means.
They are parametrized by a free parameter $p \in \R\cup \{ \pm \infty\}$ and, for any 
all-positive-components vector $(v_1,\ldots,v_n)$, are defined by
$$
\srpot{p}(v_1,\dots,v_n):=
\begin{cases} 
\left(\frac{1}{n} \sum_{i=1}^{n} v_i^p\right)^{1/p}	& \textrm{if\ } p \ne 0\,,\\
\left(\prod_{i=1}^{n} v_i\right)^{1/n}	& \textrm{if\ } p = 0\,,\\
\min(v_1,\ldots,v_n)	& \textrm{if\ } p = - \infty\,,\\
\max(v_1,\ldots,v_n)	& \textrm{if\ } p = \infty\,.
\end{cases}
$$

Upon putting $p=1,\,0,\,-1$ one gets Arithmetic, Geometric and Harmonic Mean respectively.

Power Means have been investigated ever since their conception in the 19th century. 
One of the classical results concerning them is the following
inequality, in its final form due to Landau, \cite{Landau21}
\begin{equation}
\sum_{n=1}^{\infty} \srpot{p}(a_1,\dots,a_n) < (1-p)^{-1/p} \norma{a}_1 
\textrm{ for }p \in (0,1) \textrm{ and }a \in l_1(\R_{+})\,. \nonumber
\end{equation}
The constant $(1-p)^{-1/p}$ cannot be diminished for any $p$. 
Similar inequality (with a non-optimal constant, however) was proved one year earlier by Hardy, 
\cite{Hardy20}, in the course of his commenting some still earlier results of Hilbert.

In particular, for $p=\frac{1}{2}$ one has
\begin{equation}
\sum_{n=1}^{\infty} \srpot{1/2}(a_1,\dots,a_n) < 4 \norma{a}_1. \label{eq:1/2}
\end{equation}

Nowadays these results are few of many in the emerging ``theory of Hardy means''.
Precisely, one says that $\mathfrak{A}$ is a Hardy mean when there exists a constant $C$ such that
$$
\sum_{n=1}^{\infty} \mathfrak{A}(a_1,\dots,a_n) < C\norma{a}_1
\textrm{ for any }a \in l_1(\R_{+})\,.
$$
This definition was formally introduced by Pales and Persson in \cite{PaPe04}, but it had been felt in the air since the 1920s. The authors of \cite{PaPe04} proposed certain sufficient conditions, for a number of families of means, to be Hardy. Hence it is natural to ask what other means are Hardy? Indeed, this question was extensively dealt with decades before the formal definition appeared. The detailed history of the events related to, or implied by above inequalities is sketched in extensive and catching surveys \cite{PS,DMcG,OP}, and in a recent book \cite{KMP}. 

Unfortunately, for many families of means the problem if they are Hardy remains open. Such a problem for the two-parameter family of Gini means was, for instance, considered in \cite{PaPe04}, where many cases were solved. In fact, this open problem was explicitly worded three years later in \cite[p. 89]{KMP}. \\
Other interesting families which might be considered in this context are Hamy means (cf. \cite{Hamy}, \cite[p. 364]{bullen} and Corollary~\ref{cor:HamyMeans} below) and Hayashi means (cf. \cite{Hayashi}, \cite[p. 365]{bullen} and Corollary~\ref{cor:HayashiMeans}).  These two means are closely related with the present note.

We are now going to analyse a yet another multi-parameter family of means. Namely, in 1971 Carlson, Meany and Nelson \cite{CMN}, among other things, proposed the following family, witch encompasses Power Means, 
Hamy means and Hayashi means:
$$\sr{k,s,q}(v_1\ldots v_n):= 
\begin{cases}
\srpot{s}\Big(\srpot{q}(v_{i_1},\ldots,v_{i_k}) \colon i \in C_{n}^{k}\Big) & \textrm{if\ } k<n\,, \\ 
\srpot{q} (v_1,\ldots,v_n)& \textrm{if\ } k \geq n\,,
\end{cases}
$$
where $C_{n}^{k}:=\{A \subset \{1,\ldots n\} \colon \#A=k\}$ (we have slightly abused the notation of $\srpot{s}$). Those authors where interested in certain inequalities binding the means $\sr{k,s,q}$ when the order of parameters $s$ and $q$ is being reversed. \\
These very means turn out to be extremely useful for our purposes. Namely, we are going to prove that these means are Hardy for in a large part of the parameter space.

It is indeed a generalization of the above mention means.
Moreover, by \cite{sadikova}, there holds the following inequalities
\begin{align}
\sr{k,s,q} &\leq \sr{k,t,p}\textrm{ for }q \leq p\textrm{ and }s\leq t\,, \label{ineq:qs} \\
\sr{k,s,q} &\leq \sr{k-1,s,q}\textrm{ for }s>q\,. \label{ineq:k}
\end{align}

\section{Main Result}
In our main Theorem~\ref{thm:main} we are going to prove that $\sr{2,1,0}$ is a Hardy mean.
Obviously, all means majorized by some Hardy mean (or, more general, majorized up to some constant coefficient) are Hardy, too. Therefore, knowing that $\sr{2,1,0}$ is Hardy, the inequalities \eqref{ineq:k} and \eqref{ineq:qs} imply that $\sr{k,s,q}$ are Hardy, too, for a vast family of parameters. This is precisely worded in Proposition~\ref{prop:major} below.  

\begin{thm}
\label{thm:main}
$\sr{2,1,0}$ is a Hardy mean and 
$$\sum_{n=1}^{\infty} \sr{2,1,0} (a_1,\ldots,a_n) < 
\constproved \norma{a}_1 \textrm{ for every }a \in l^1(\mathbb{R}_{+})\,.$$
\end{thm}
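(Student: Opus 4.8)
The plan is to begin by computing $\sr{2,1,0}$ in closed form and reducing the whole statement to a familiar one. Setting $b_i := \sqrt{a_i}$, and using that $\srpot{0}(a_i,a_j)=b_ib_j$ while $\srpot{1}$ is the arithmetic mean, for every $n\ge 2$ one obtains
\[
\sr{2,1,0}(a_1,\ldots,a_n)=\frac{1}{\binom{n}{2}}\sum_{1\le i<j\le n} b_ib_j=\frac{S_n^2-Q_n}{n(n-1)},
\]
where $S_n:=\sum_{i=1}^n b_i$ and $Q_n:=\sum_{i=1}^n b_i^2=\sum_{i=1}^n a_i$; the identity $\sum_{i<j}b_ib_j=\tfrac12(S_n^2-Q_n)$ is what drives this. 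The $n=1$ term is simply $a_1=S_1^2$, and one checks the displayed formula remains valid at $n=2$ despite the definition switching branches there.

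First I would then compare the series $\sum_{n}\sr{2,1,0}(a_1,\ldots,a_n)$ term by term against the classical Cauchy--Hardy $\ell^2$ sum $\sum_{n=1}^{\infty}(S_n/n)^2$, whose $n=1$ terms already agree with those above. A direct computation gives, for $n\ge 2$,
\[
\frac{S_n^2-Q_n}{n(n-1)}-\frac{S_n^2}{n^2}=\frac{S_n^2-nQ_n}{n^2(n-1)},
\]
and by the Cauchy--Schwarz inequality $S_n^2=\bigl(\sum_{i=1}^n b_i\bigr)^2\le n\sum_{i=1}^n b_i^2=nQ_n$, so each of these differences is non-positive. The subtle point is that although the coefficient $\frac{1}{n(n-1)}$ strictly exceeds $\frac{1}{n^2}$, the diagonal correction $-\frac{Q_n}{n(n-1)}$ more than compensates; this exchange is the heart of the argument.

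Summing over $n$ I would conclude
\[
\sum_{n=1}^{\infty}\sr{2,1,0}(a_1,\ldots,a_n)\le \sum_{n=1}^{\infty}\Bigl(\frac{S_n}{n}\Bigr)^2,
\]
and then invoke Hardy's inequality with exponent $p=2$, whose sharp constant $4$ is never attained for a non-zero sequence, to get $\sum_{n=1}^{\infty}(S_n/n)^2<4\sum_{n=1}^{\infty}b_n^2=\constproved\,\norma{a}_1$. Since $a\in l^1(\mathbb{R}_{+})$ is equivalent to $b\in\ell^2$, all the series involved are finite, and the strictness of the final inequality yields the strict bound claimed.

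The main obstacle is conceptual rather than computational: recognizing that one should \emph{discard} the enlargement of the weight from $\frac{1}{n^2}$ to $\frac{1}{n(n-1)}$ precisely because it is paid for by the non-positive correction coming from $-Q_n$, thereby collapsing the entire statement onto the classical $\ell^2$ Hardy inequality, whose optimal constant happens to be exactly $\constproved$. The only points requiring care are the verification of the closed form at the boundary case $n=2$, the finiteness of the series, and the justification that the inequality is strict, which is immediate from the non-attainment of the Hardy constant.
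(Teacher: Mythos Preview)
Your proof is correct and is essentially the same as the paper's: the inequality $\frac{S_n^2-Q_n}{n(n-1)}\le (S_n/n)^2$ is exactly the paper's comparison $\sr{2,1,0}\le \srpot{1/2}$ rewritten via $b_i=\sqrt{a_i}$, with Cauchy--Schwarz playing the role of the power-mean inequality $\srpot{1/2}\le\srpot{1}$, and your appeal to the $\ell^2$ Hardy inequality with constant $4$ is precisely the paper's appeal to the Hardy constant $(1-1/2)^{-2}=4$ for $\srpot{1/2}$. The only difference is notational packaging.
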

\begin{proof}
We prove that $\sr{2,1,0}$ is majorized by $\srpot{1/2}$. Indeed, 
\begin{align}
\sr{2,1,0} (a_1,\ldots,a_n)&= {n \choose 2}^{-1} \sum_{1\leq i<j \leq n} \sqrt{a_ia_j} \nonumber \\
&= \tfrac{1}{n(n-1)} \sum_{1\leq i<j \leq n} 2\sqrt{a_ia_j} \nonumber \\
&= \tfrac{n}{n-1} \left(\left( \tfrac{1}{n}\sum_{i=1}^{n} \sqrt{a_i} \right)^2 - \frac{1}{n^2}\sum_{i=1}^{n} a_i \right)  \nonumber \\ 
&= \tfrac{n}{n-1} \left(\srpot{1/2}(a_1,\ldots,a_n)-\tfrac{1}{n}\srpot{1}(a_1,\ldots,a_n)\right) \nonumber \\
&\leq \tfrac{n}{n-1} \left(\srpot{1/2}(a_1,\ldots,a_n)-\tfrac{1}{n}\srpot{1/2}(a_1,\ldots,a_n)\right) \nonumber \\
&= \srpot{1/2}(a_1,\ldots,a_n)\,. \label{ineq:1/2}
\end{align}

 Hence, by \eqref{eq:1/2}, one obtains
$$\sum_{n=1}^{\infty} \sr{2,1,0} (a_1,\ldots,a_n) 
\leq \sum_{n=1}^{\infty} \srpot{1/2} (a_1,\ldots,a_n) 
< \constproved \norma{a}_1.
$$
\end{proof}

\begin{xrem}
The constant $\constproved$ in the above theorem cannot be diminished.
Indeed, upon taking $a_n=\tfrac{1}{n}$, a simple calculation yields
$$\lim_{n \rightarrow \infty} a_n^{-1} \sr{2,1,0} (a_1,\ldots,a_n)=\constproved.$$
Then the machinery originally put forward for the Power Means in \cite[pp. 241--242]{HLP}
is applicable here. We thus obtain that for an arbitrary $\varepsilon>0$ the sequence 
$$(a_1,\ldots,a_N,(N+1)^{-2},(N+2)^{-2},(N+3)^{-2},\ldots)$$
contradicts, for sufficiently large $N$, the constant being less then $\constproved-\varepsilon$.
\end{xrem}

\section{Discussion of parameters}

We know that for all $q_1>0$ and $C(k,q_1)=n^{-1/q_1}$ the inequality 
$$
\sr{q_1}(v_1,\ldots,v_k)>C(k,q_1)\max(v_1,\ldots,v_k)
$$
holds for any $v \in \R_{+}^k$. In particular for any $q_2 \in \mathbb{R} \cup \{\pm \infty\}$
\begin{equation}
\sr{q_1}(v_1,\ldots,v_k)>C(k,q_1)\sr{q_2}(v_1,\ldots,v_k)\,. \label{eq:bul_237}
\end{equation}

The above inequalities are instrumental in proving the following
\begin{prop}
\label{prop:major}
For any $k\ge2$ 
\begin{itemize}
\item $\sr{k,s,q}$ is a Hardy mean for no $s \ge 1$ and $q>0$,
\item $\sr{k,1,q}$ is a Hardy mean for any $q\le0$,
\item $\sr{k,s,q}$ is a Hardy mean for any $s<1$ and $q \in \mathbb{R} \cup \{\pm \infty\}$.
\end{itemize}
\end{prop}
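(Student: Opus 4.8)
The three items separate naturally into one negative and two positive assertions, and I would dispatch each by combining the monotonicity relations \eqref{ineq:qs} and \eqref{ineq:k}, the pointwise comparison \eqref{eq:bul_237}, and Theorem~\ref{thm:main}. Throughout I write $\srpot{q}(a_{i_1},\ldots,a_{i_k})$ for the inner mean attached to a $k$-set $i\in C_n^k$, exactly as in the definition of $\sr{k,s,q}$.

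For the second item ($s=1$, $q\le0$) I would reduce everything to the Main Theorem by two monotonicity steps. Since $q\le0$, applying \eqref{ineq:qs} with the outer exponent held at $1$ gives $\sr{k,1,q}\le\sr{k,1,0}$. Because the outer exponent $1$ strictly exceeds the inner exponent $0$, relation \eqref{ineq:k} is available at every stage and produces the chain $\sr{k,1,0}\le\sr{k-1,1,0}\le\cdots\le\sr{2,1,0}$. As $\sr{2,1,0}$ is Hardy by Theorem~\ref{thm:main} and any mean majorized by a Hardy mean is itself Hardy, the conclusion follows.

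The third item ($s<1$, arbitrary $q$) is the one I expect to be the main obstacle, precisely because $q$ may be arbitrarily large (even $+\infty$), so the inner mean cannot be compared directly with a sub-unit power mean. My plan is to push $q$ all the way to $+\infty$ and then absorb the resulting loss into a harmless constant. First fix an auxiliary exponent $s'\in(0,1)$ with $s\le s'$, which is possible since $s<1$. By \eqref{ineq:qs} one has $\sr{k,s,q}\le\sr{k,s,\infty}\le\sr{k,s',\infty}$. Next I would invoke \eqref{eq:bul_237} with $q_1=s'$ and $q_2=\infty$, i.e. $\srpot{\infty}(v_{i_1},\ldots,v_{i_k})\le k^{1/s'}\srpot{s'}(v_{i_1},\ldots,v_{i_k})$ for every $k$-tuple. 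Substituting this bound inside the outer $s'$-mean and using its monotonicity and homogeneity, together with the elementary identity $\sr{k,s',s'}=\srpot{s'}$, I would obtain $\sr{k,s,q}\le k^{1/s'}\srpot{s'}$. Since $\srpot{s'}$ is a Hardy mean for $s'\in(0,1)$ by Landau's inequality, and a mean majorized by a constant multiple of a Hardy mean is Hardy, the same holds for $\sr{k,s,q}$. The crux here is recognizing that, however large $q$ is, the maximum over a $k$-tuple is still controlled by a constant multiple of the $s'$-power mean, and that this constant is innocuous exactly because the outer exponent can be kept below $1$, where the power mean remains Hardy.

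For the first, negative item ($s\ge1$, $q>0$) I would exhibit a divergent lower bound. Take any nonzero $a\in l^1(\R_{+})$; without loss of generality $a_1>0$. For $n>k$, the classical monotonicity $\srpot{s}\ge\srpot{1}$ (valid as $s\ge1$) gives $\sr{k,s,q}(a_1,\ldots,a_n)\ge\binom{n}{k}^{-1}\sum_{i\in C_{n}^{k}}\srpot{q}(a_{i_1},\ldots,a_{i_k})$. Keeping only the $\binom{n-1}{k-1}$ index-sets containing $1$ and using \eqref{eq:bul_237} in the form $\srpot{q}(a_{i_1},\ldots,a_{i_k})\ge k^{-1/q}a_1$ (legitimate for $q>0$ whenever $1\in i$), the right-hand side is at least $\tfrac{k^{1-1/q}}{n}\,a_1$. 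Summing over $n$ then yields a divergent harmonic tail, so no finite constant can serve in the Hardy inequality and $\sr{k,s,q}$ fails to be Hardy.
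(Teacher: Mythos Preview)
Your argument is correct and follows essentially the paper's strategy. For the second and third items you do exactly what the paper does, only making explicit the chain $\sr{k,1,0}\le\cdots\le\sr{2,1,0}$ via \eqref{ineq:k} and the reduction to an auxiliary $s'\in(0,1)$ that the paper hides in a ``without loss of generality''; for the first item the paper instead writes the single comparison $\sr{k,s,q}\ge\sr{k,1,q}\ge k^{-1/q}\sr{k,1,1}=k^{-1/q}\srpot{1}$ and invokes the classical fact that the arithmetic mean is not Hardy, which is the abstract version of the explicit harmonic-tail divergence you compute.
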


\begin{proof}
Let us recall that the length of vectors in $\srpot{q}$ in the definition of $\sr{k,s,q}$ is fixed
(and equal $k$). It makes applying \eqref{eq:bul_237} very natural for the comparison of means 
with different $q$-s.

First item follows from the fact that $\sr{k,1,1}$ is an arithmetic mean. So it is not Hardy.
But $\sr{k,s,q}\geq\sr{k,1,q}\geq C\sr{k,1,1}$ for some constant $C$, hence $\sr{k,s,q}$ is not Hardy, too.

Second item is an immediate corollary from Theorem~\ref{thm:main} and \eqref{ineq:qs}.

Third item is being prove in two steps. First, with no loss of generality we may assume that $s\in(0,1)$.
We know that $\sr{k,s,s}=\srpot{s}$, so it is a Hardy mean.
Second, applying \eqref{eq:bul_237} one gets, $\sr{k,s,s} > C(k,s)\sr{k,s,q}$. So $\sr{k,s,q}$ is Hardy as well.
\end{proof}

\begin{cor}
\label{cor:HamyMeans}
For any $k\ge 2$ the Hamy mean $\hamymean{k}:=\sr{k,1,0}$ (cf. \cite{Hamy}, \cite[pp. 364--365]{bullen} for more details) is a Hardy mean.
\end{cor}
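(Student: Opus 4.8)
The plan is to observe that the Hamy mean requires no genuinely new argument: it is exactly the endpoint case $q=0$ of the family already settled in Proposition~\ref{prop:major}. Indeed $\hamymean{k}=\sr{k,1,0}$ has parameters $s=1$ and $q=0\le0$, so it falls under the second item of that proposition, which asserts that $\sr{k,1,q}$ is Hardy for all $q\le0$. Specializing $q=0$ would deliver the claim at once, and this is the route I would record as the one-line proof.

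If instead one prefers a self-contained derivation that does not route through the proposition, the plan is as follows. First I would note that for $(s,q)=(1,0)$ the condition $s>q$ of inequality~\eqref{ineq:k} is satisfied, so \eqref{ineq:k} may be iterated to produce the descending chain
$$\sr{k,1,0}\le\sr{k-1,1,0}\le\cdots\le\sr{2,1,0}.$$
Next I would invoke Theorem~\ref{thm:main}, which shows that the base term $\sr{2,1,0}$ is Hardy with constant $\constproved$. Finally, appealing to the elementary observation recorded at the start of the Main Result section --- that any mean majorized (up to a constant) by a Hardy mean is itself Hardy --- I would conclude that $\hamymean{k}=\sr{k,1,0}$ is Hardy, with the same constant $\constproved$ inherited from \eqref{eq:1/2}.

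There is essentially no obstacle to overcome here, and I would be explicit that the corollary is a harvest rather than a fresh theorem: all of the real content sits in Theorem~\ref{thm:main} (the majorization $\sr{2,1,0}\le\srpot{1/2}$ together with Landau's inequality \eqref{eq:1/2}) and in the monotonicity relations \eqref{ineq:qs}--\eqref{ineq:k} imported from \cite{sadikova}. The single point meriting a moment's attention is the \emph{direction} of the monotonicity in \eqref{ineq:k}: one must check that decreasing $k$ \emph{increases} the mean, so that the chain terminates at the known Hardy case $\sr{2,1,0}$ from above rather than below; the hypothesis $k\ge2$ guarantees that this base term is actually reached.
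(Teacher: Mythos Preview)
Your proposal is correct and matches the paper's approach: the corollary is stated immediately after Proposition~\ref{prop:major} with no separate proof, so it is intended as the specialization $q=0$ of the second item there, exactly as you identify. Your alternative self-contained route via \eqref{ineq:k} and Theorem~\ref{thm:main} is simply the unpacked content of that second item in the case $q=0$, so there is no substantive difference.
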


\begin{cor}
\label{cor:HayashiMeans}
For any $k\ge 2$ the Hayashi mean $\hayashimean{k}:=\sr{k,0,1}$ (cf. \cite{Hayashi}, \cite[pp. 365--366]{bullen} for more details) is a Hardy mean.
\end{cor}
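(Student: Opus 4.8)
The plan is to read off that the Hayashi mean $\hayashimean{k}=\sr{k,0,1}$ is the member of the Carlson--Meany--Nelson family with outer exponent $s=0$ and inner exponent $q=1$, and then to apply the third item of Proposition~\ref{prop:major} directly. That item asserts that $\sr{k,s,q}$ is Hardy whenever $s<1$ (with $q$ completely unrestricted); since $s=0<1$ and $q=1\in\R$, the hypotheses are met and the conclusion is immediate. So at the level of bookkeeping the corollary is a one-line consequence of Proposition~\ref{prop:major}.

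For completeness I would spell out why the range $s<1$ reaches the boundary value $s=0$ used by the Hayashi mean, since the proof of Proposition~\ref{prop:major} first reduces to $s\in(0,1)$. The reduction rests on the monotonicity \eqref{ineq:qs}: picking any interior value, say $s'=\tfrac12$, one has $0\le s'$, whence $\sr{k,0,1}\le\sr{k,s',1}$; because every mean majorized by a Hardy mean is itself Hardy, it then suffices to know that $\sr{k,s',1}$ is Hardy.

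The remaining step is the comparison already used in Proposition~\ref{prop:major}. On each $k$-tuple the power-mean estimate \eqref{eq:bul_237}, applied with $q_1=s'>0$ and $q_2=1$, gives $\srpot{s'}>C(k,s')\srpot{1}$; averaging these with the monotone, homogeneous outer mean $\srpot{s'}$ yields $\sr{k,s',s'}>C(k,s')\,\sr{k,s',1}$. Since $\sr{k,s',s'}=\srpot{s'}$ is Hardy by Landau's inequality for exponents in $(0,1)$, the mean $\sr{k,s',1}$ is majorized by a constant multiple of a Hardy mean, hence Hardy; together with the previous step this finishes the proof.

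I do not expect a genuine obstacle here: all the analytic weight sits in Theorem~\ref{thm:main}, in Landau's inequality, and in the elementary comparison \eqref{eq:bul_237}. The only point that repays a moment's attention is that $s=0$ lies on the boundary of the interval $(0,1)$ internal to Proposition~\ref{prop:major}, so one must first slide from $s=0$ up to an interior exponent via \eqref{ineq:qs} before the power-mean comparison can be brought to bear.
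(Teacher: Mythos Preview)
Your proposal is correct and follows exactly the route the paper intends: the corollary is stated without a separate proof because $\sr{k,0,1}$ falls directly under the third item of Proposition~\ref{prop:major} (case $s=0<1$, $q=1$). Your additional unpacking of how the reduction ``without loss of generality $s\in(0,1)$'' via \eqref{ineq:qs} covers the boundary value $s=0$ is a faithful expansion of that proposition's proof, not a different approach.
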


The problem whether $\sr{k,s,q}$ is a Hardy mean for $q\leq0$, $s>1$ and $k \geq 2$ remains open.
Let us note that the possible answer may depend on $k$.

\end{document}